\theoremstyle{plain}
\newtheorem{Theorem}{Theorem}[section]
\newtheorem{Corollary}[Theorem]{Corollary}
\newtheorem{Lemma}[Theorem]{Lemma}
\newenvironment{Proof}
{\begin{trivlist}\item[]{{\sc Proof.}}}{\hfill{$\square$}\noindent\end{trivlist}}
\newcommand{\gaussm}[3]{\genfrac{[}{]}{0pt}{}{#1}{#2}_{#3}}
\newcommand{\F}[2]{\mathbb{F}_{#2}^{#1}}
\theoremstyle{definition}
\newtheorem{Definition}[Theorem]{Definition}
\theoremstyle{remark}
\begin{document}

%%%%%%%%%%%%%%%%%%%%%%%%%%%%%%%%%%%%%%%%%%%%%%%%%%%%%%%%%%%%%%%%%%%%%%%%%%%%%%
%% Title:
%%%%%%%%%%%%%%%%%%%%%%%%%%%%%%%%%%%%%%%%%%%%%%%%%%%%%%%%%%%%%%%%%%%%%%%%%%%%%%

\title{Upper bounds for partial spreads}

\author[Sascha Kurz]{Sascha Kurz$^\star$}
\address{Department of Mathematics, University of Bayreuth, 95440 Bayreuth, Germany}
\email{sascha.kurz@uni-bayreuth.de}
\thanks{$^\star$ The work of the author was supported by the ICT COST Action IC1104
and grant KU 2430/3-1 -- Integer Linear Programming Models for Subspace Codes and Finite Geometry
from the German Research Foundation.}

\date{}

\begin{abstract}
  A \emph{partial $t$-spread} in $\F{n}{q}$ is a collection of $t$-dimensional 
  subspaces with trivial intersection such that each non-zero vector is covered at most once. 
  We present some improved upper bounds on the maximum sizes.
  
  \medskip
  
  \noindent
  \textbf{Keywords:} Galois geometry, partial spreads, constant dimension codes, 
  and vector space partitions\\ 
  \textbf{MSC:} 51E23; 05B15, 05B40, 11T71, 94B25 
  %% 51E23 Spreads and packing problems
  %% 94B25 Combinatorial codes
  %% 05B15 Orthogonal arrays, Latin squares, Room squares
  %% 05B40 Packing and covering
  %% 11T71 Algebraic coding theory; cryptography 
\end{abstract}

\maketitle

\section{Introduction}

\noindent
Let $q>1$ be a prime power and $n$ a positive integer. A \emph{vector space partition} $\mathcal{P}$ of $\F{n}{q}$ is a collection of 
subspaces with the property that every non-zero vector is contained in a unique member of $\mathcal{P}$. If $\mathcal{P}$  contains 
$m_d$ subspaces of dimension $d$, then $\mathcal{P}$ is of type $k^{m_k}\dots 1^{m_1}$. We may leave out some of the cases with $m_d=0$. 
Subspaces of dimension~$1$ are called \emph{holes}. If there is at least one non-hole, then $\mathcal{P}$ is called non-trivial. 

A \emph{partial $t$-spread} in $\F{n}{q}$ is a collection of $t$-dimensional subspaces such that the non-zero vectors are covered at most 
once, i.e., a vector space partition of type $t^{m_t}1^{m_1}$. By $A_q(n,2t;t)$ we denote the maximum value of $m_t$ 
\footnote{The more general notation $A_q(n,2t-2w;t)$ denotes the maximum cardinality of a collection of $t$-dimensional subspaces, 
whose pairwise intersections have a dimension of at most $w$. Those objects are called \emph{constant dimension codes}, see 
e.g.~\cite{etzionsurvey}. For known bounds, we refer to \url{http://subspacecodes.uni-bayreuth.de} \cite{TableSubspacecodes} containing 
also the generalization to \emph{subspace codes} of mixed dimension.}.  
Writing $n=kt+r$, with $k,r\in\mathbb{N}_0$ and $r\le t-1$, we can state that for $r\le 1$ or $n\le 2t$ the 
exact value of $A_q(n,2t;t)$ was known for more than forty years \cite{beutelspacher1975partial}. Via a computer search the cases 
$A_2(3k+2,6;3)$ were settled in 2010 by El-Zanati et al.~\cite{spreadsk3}. In 2015 the case $q=r=2$ was resolved by continuing the original approach 
of Beutelspacher \cite{kurzspreads}, i.e., by \textit{considering} the set of holes in $(n-2)$-dimensional subspaces and some averaging 
arguments. Very recently, N{\u{a}}stase and Sissokho found a very clear generalized averaging method for the number of holes in 
$(n-j)$-dimensional subspaces, where $j\le t-2$, and general $q$, see \cite{nastase2016maximum}. Their Theorem~5 determines the exact 
values of $A_q(kt+r,2t;t)$ in all cases where $t>\gaussm{r}{1}{q}:=\frac{q^r-1}{q-1}$.
Here, we streamline and generalize their approach leading to improved upper bounds on $A_q(n,2t;t)$, c.f.~\cite{nastase2016maximumII}.  

\section{Subspaces with the minimum number of holes}

\begin{Definition}
  A vector space partition $\mathcal{P}$ of $\F{n}{q}$ has \emph{hole-type} $(t,s,m_1)$, if it is of type $t^{m_t}\dots s^{m_s}1^{m_1}$, 
  for some integers $n>t\ge s\ge 2$, $m_i\in\mathbb{N}_0$ for $i\in\{1,s,\dots,t\}$, and $\mathcal{P}$ is non-trivial.
\end{Definition}

\begin{Lemma} (C.f.~\cite[Proof of Lemma 9]{nastase2016maximum}.)
  \label{lemma_modulo}
  Let $\mathcal{P}$ be a non-trivial vector space partition of $\F{n}{q}$ of hole-type $(t,s,m_1)$ and $l,x\in\mathbb{N}_0$ with $\sum_{i=s}^t m_i =lq^s+x$. 
  $\mathcal{P}_{H}=\{U\cap H\,:\, U\in\mathcal{P}\}$ is a vector space partition of type $t^{m_t'}\dots (s-1)^{m_{s-1}'}1^{m_1'}$, 
  for a hyperplane $H$ with $\widehat{m}_1$ holes (of $\mathcal{P}$). We have $\widehat{m}_1\equiv \frac{m_1+x-1}{q}\pmod{q^{s-1}}$. 
  If $s>2$, then $\mathcal{P}_{H}$ is non-trivial and $m_1'=\widehat{m}_1$.
\end{Lemma}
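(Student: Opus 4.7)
The plan is to analyze the intersection structure $\mathcal{P}_H$ directly and then compare two point-sum identities modulo suitable powers of $q$.

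First I would handle the structural part. For any $U\in\mathcal{P}$, either $U\subseteq H$ (so $U\cap H=U$ has dimension $\dim U$) or $U\not\subseteq H$ (so $U\cap H$ is a hyperplane of $U$ of dimension $\dim U-1$). Every non-zero $v\in H$ lies in a unique $U\in\mathcal{P}$ and hence in a unique non-trivial element of $\mathcal{P}_H$, so $\mathcal{P}_H$ is a vector space partition of $H$. Since $\mathcal{P}$ has no blocks of dimension $2,\dots,s-1$, the dimensions that can appear in $\mathcal{P}_H$ lie in $\{1\}\cup\{s-1,s,\dots,t\}$, matching the claimed type. Under the hypothesis $s>2$, a $1$-dimensional member of $\mathcal{P}_H$ can only come from a hole of $\mathcal{P}$ that lies inside $H$, so $m_1'=\widehat{m}_1$; and any non-hole $U\in\mathcal{P}$ meets $H$ in dimension $\ge s-1\ge 2$, so $\mathcal{P}_H$ inherits non-triviality.

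For the congruence I would combine the ambient-space and hyperplane point-sum identities
\begin{equation*}
\frac{q^n-1}{q-1}=m_1+\sum_{j}\frac{q^{d_j}-1}{q-1},\qquad \frac{q^{n-1}-1}{q-1}=\widehat{m}_1+\sum_{j}\frac{q^{\dim(U_j\cap H)}-1}{q-1},
\end{equation*}
where $j$ ranges over the $N=\sum_{i=s}^{t}m_i=lq^s+x$ non-holes $U_j$ of dimension $d_j\ge s$. The elementary fact $\frac{q^a-1}{q-1}\equiv\frac{q^b-1}{q-1}\pmod{q^b}$ for $a\ge b$ (since the difference equals $q^b\cdot\tfrac{q^{a-b}-1}{q-1}$) collapses the first identity modulo $q^s$ to $(q-1)m_1\equiv N-1\equiv x-1\pmod{q^s}$. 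In particular this forces $q\mid m_1+x-1$, and rewriting as $qm_1\equiv m_1+x-1\pmod{q^s}$ yields $m_1\equiv\tfrac{m_1+x-1}{q}\pmod{q^{s-1}}$. Applying the same fact with $b=s-1$ collapses the hyperplane identity to $(q-1)\widehat{m}_1\equiv x-1\pmod{q^{s-1}}$; since $\gcd(q-1,q)=1$, the factor $q-1$ is invertible modulo $q^{s-1}$, giving $\widehat{m}_1\equiv m_1\pmod{q^{s-1}}$. Chaining the two congruences yields the claimed $\widehat{m}_1\equiv\tfrac{m_1+x-1}{q}\pmod{q^{s-1}}$.

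The main obstacle is really just careful modular bookkeeping: picking the right modulus for each identity ($q^s$ for $\mathbb{F}_q^n$ and $q^{s-1}$ for $H$), verifying via $n>t\ge s$ and $d_j\ge s$ that the higher-order terms vanish, and noticing that the integrality of $\tfrac{m_1+x-1}{q}$ needed even to interpret the statement is itself a by-product of the first reduction. The structural assertions for $s>2$ follow formally once the possible dimensions in $\mathcal{P}_H$ are enumerated.
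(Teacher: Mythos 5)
Your proof is correct and follows essentially the same route as the paper: both arguments rest on the two point-count identities for $\F{n}{q}$ and for the hyperplane $H$, reduced modulo powers of $q$ using the fact that every non-hole contributes $\gaussm{s}{1}{q}$ (resp.\ $\gaussm{s-1}{1}{q}$) plus a multiple of $q^s$ (resp.\ $q^{s-1}$) points. The only cosmetic difference is that the paper takes the single linear combination $q\cdot(\text{hyperplane identity})-(\text{ambient identity})$ via $1+q\gaussm{n-1}{1}{q}-\gaussm{n}{1}{q}=0$ to reach $1+q\widehat{m}_1-m_1-x\equiv 0\pmod{q^s}$ in one step, whereas you reduce the two identities separately and chain $\widehat{m}_1\equiv m_1\equiv\frac{m_1+x-1}{q}\pmod{q^{s-1}}$; both yield the same conclusion, including the integrality of $\frac{m_1+x-1}{q}$.
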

\begin{Proof} 
  If $U\in\mathcal{P}$, then $\dim(U)-\dim(U\cap H)\in\{0,1\}$ for an arbitrary hyperplane $H$. 
  Since $\mathcal{P}$ is non-trivial, we have $n\ge s$. For $s>2$, 
  counting the $1$-dimensional subspaces of $\F{n}{q}$ and $H$, via $\mathcal{P}$ and $\mathcal{P}_H$, yields
  $$
    \left(lq^s+x\right)\cdot\gaussm{s}{1}{q}+aq^s+m_1=\gaussm{n}{1}{q}
    \quad\text{and}\quad
    \left(lq^s+x\right)\cdot\gaussm{s-1}{1}{q}+a'q^{s-1}+\widehat{m}_1=\gaussm{n-1}{1}{q}
  $$
  for some $a,a'\in\mathbb{N}_0$. Since $1+q\cdot\gaussm{n-1}{1}{q}-\gaussm{n}{1}{q}=0$ we conclude
  $1+q\widehat{m}_1-m_1-x\equiv 0\pmod {q^s}$. Thus, $\mathbb{Z}\ni \widehat{m}_1\equiv \frac{m_1+x-1}{q} \pmod {q^{s-1}}$.
  For $s=2$ we have
  $$
    \left(lq^2+x\right)\cdot(q+1)+aq^2+m_1=\gaussm{n}{1}{q}
    \quad\text{and}\quad
    \left(lq^2+x\right)\cdot 1+a'q+\widehat{m}_1=\gaussm{n-1}{1}{q}
  $$ 
  leading to the same conclusion $\widehat{m}_1\equiv \frac{m_1+x-1}{q} \pmod {q^{s-1}}$.   
\end{Proof}
   
\begin{Lemma} (C.f.~\cite[Proof of Lemma 9]{nastase2016maximum}.)
  \label{key_lemma}
  Let $\mathcal{P}$ be a vector space partition of $\F{n}{q}$ of hole-type $(t,s,m_1)$, $l,x\in\mathbb{N}_0$ with $\sum_{i=s}^t m_i =lq^s+x$, 
  and $b,c\in\mathbb{Z}$ with $m_1=bq^s+c\ge 1$. If $x\ge 1$, 
  then there exists a hyperplane $\widehat{H}$ with $\widehat{m}_1=\widehat{b}q^{s-1}+\widehat{c}$ holes, where 
  $\widehat{c}:=\frac{c+x-1}{q}\in\mathbb{Z}$ and $b>\widehat{b}\in\mathbb{Z}$.
\end{Lemma}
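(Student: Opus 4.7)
The plan is to combine Lemma \ref{lemma_modulo}, which pins down the residue of $\widehat{m}_1$ modulo $q^{s-1}$ for every hyperplane, with a standard averaging argument over the hyperplanes of $\F{n}{q}$.

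First, I would observe that Lemma \ref{lemma_modulo} applies: since $x\ge 1$ forces $\sum_{i=s}^t m_i\ge 1$, the partition $\mathcal{P}$ is non-trivial. For every hyperplane $H$, the corresponding hole count $\widehat{m}_1(H)$ satisfies $\widehat{m}_1(H)\equiv (m_1+x-1)/q\pmod{q^{s-1}}$ and, in particular, $(m_1+x-1)/q\in\mathbb{Z}$. Substituting $m_1=bq^s+c$ rewrites this as $(m_1+x-1)/q=bq^{s-1}+(c+x-1)/q$, which immediately forces $\widehat{c}=(c+x-1)/q\in\mathbb{Z}$. Hence every hyperplane $H$ has $\widehat{m}_1(H)=\widehat{b}_H q^{s-1}+\widehat{c}$ for a uniquely determined $\widehat{b}_H\in\mathbb{Z}$, and it suffices to produce some $H$ with $\widehat{b}_H\le b-1$.

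To do so, I would double count pairs (hole, hyperplane containing it): each of the $m_1$ holes lies in exactly $\gaussm{n-1}{1}{q}$ of the $\gaussm{n}{1}{q}$ hyperplanes, so $\sum_H \widehat{m}_1(H)=m_1\gaussm{n-1}{1}{q}$ and the average is $\bar{m}_1=m_1(q^{n-1}-1)/(q^n-1)$. A direct check gives $(q^{n-1}-1)/(q^n-1)<1/q$ because $q\ge 2$, so $\bar{m}_1<m_1/q$. Using $x\ge 1$, we also have $m_1/q\le (m_1+x-1)/q=bq^{s-1}+\widehat{c}$, so $\bar{m}_1<bq^{s-1}+\widehat{c}$. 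Therefore some hyperplane $\widehat{H}$ satisfies $\widehat{m}_1(\widehat{H})<bq^{s-1}+\widehat{c}$, and combining this strict inequality with $\widehat{m}_1(\widehat{H})=\widehat{b}_{\widehat{H}}q^{s-1}+\widehat{c}$ yields $\widehat{b}:=\widehat{b}_{\widehat{H}}<b$, as required.

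The main obstacle is really just ensuring that $x\ge 1$ is used in the essential place. It is precisely this assumption that opens the gap $(m_1+x-1)/q-m_1/q=(x-1)/q\ge 0$, which, together with the strict averaging inequality $\bar{m}_1<m_1/q$, upgrades the averaging bound past the congruence threshold $bq^{s-1}+\widehat{c}$. Without $x\ge 1$ one would only obtain $\bar{m}_1<m_1/q$, insufficient to rule out the borderline case $\widehat{b}_H=b$ for all~$H$.
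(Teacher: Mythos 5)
Your proof is correct and follows essentially the same route as the paper: invoke Lemma~\ref{lemma_modulo} to fix the residue of $\widehat{m}_1$ modulo $q^{s-1}$ (which also yields $\widehat{c}\in\mathbb{Z}$), then average the hole counts over all hyperplanes to find one with $\widehat{m}_1<m_1/q\le bq^{s-1}+\widehat{c}$. The paper merely phrases the last step as a contradiction (assuming $\widehat{b}\ge b$ for the hyperplane with the fewest holes), whereas you derive $\widehat{b}<b$ directly; the content is identical.
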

\begin{Proof} 
  Apply Lemma~\ref{lemma_modulo} and observe $m_1\equiv c\pmod {q^s}$. 
  Let the number of holes in $\widehat{H}$ be minimal. Then, 
  \begin{equation}
    \label{ie_m1}
    \widehat{m}_1\le \text{average number of holes per hyperplane}=m_1\cdot\gaussm{n-1}{1}{q}/\gaussm{n}{1}{q}<\frac{m_1}{q}.
  \end{equation}
  Assuming $\widehat{b}\ge b$ yields $q\widehat{m}_1\ge q\cdot(bq^{s-1}+\widehat{c})=bq^s+c+x-1\ge m_1$, which contradicts Inequality~(\ref{ie_m1}). 
\end{Proof}

\begin{Corollary}
  \label{cor_j_times}
  Using the notation from Lemma~\ref{key_lemma}, let $\mathcal{P}$ be a non-trivial vector space partition with $x\ge 1$ 
  and $f$ be the largest integer such that $q^f$ divides $c$. For each $0\le j\le s-\max\{1,f\}$ there exists an 
  $(n-j)$-dimensional subspace $U$ containing $\widehat{m}_1$ holes  
  with $\widehat{m}_1\equiv \widehat{c}~\pmod {q^{s-j}}$ and $\widehat{m}_1\le (b-j)\cdot q^{s-j}+\widehat{c}$, where
  $\widehat{c}=\frac{c+\gaussm{j}{1}{q}\cdot (x-1)}{q^j}$.
\end{Corollary}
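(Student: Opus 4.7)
The plan is to proceed by induction on $j$, iterating Lemma \ref{key_lemma} and tracking how the parameters $s$, $b$, $c$, and $x$ evolve under restriction. For the base case $j=0$ I would take $U=\F{n}{q}$ itself; then $\widehat{m}_1=m_1=bq^s+c$ gives both the congruence and the bound immediately, matching $\widehat{c}=c=(c+\gaussm{0}{1}{q}(x-1))/q^0$ (using $\gaussm{0}{1}{q}=0$).

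For the inductive step, suppose I already have an $(n-j)$-dimensional subspace $U_j$ containing $\widehat{m}_1^{(j)}$ holes of $\mathcal{P}$, written as $\widehat{m}_1^{(j)}=b^{(j)}q^{s-j}+\widehat{c}_j$ with $b^{(j)}\le b-j$ and $\widehat{c}_j=(c+\gaussm{j}{1}{q}(x-1))/q^j$. I would then turn to the restricted partition $\mathcal{P}_{U_j}:=\{V\cap U_j:V\in\mathcal{P}\}$. Because the hypothesis $j\le s-\max\{1,f\}$ keeps $s-j\ge 2$ whenever a further restriction is required, iterated application of the ``if $s>2$'' clause of Lemma \ref{lemma_modulo} shows that $\mathcal{P}_{U_j}$ is a non-trivial vector space partition of hole-type $(t,s-j,\widehat{m}_1^{(j)})$ whose $1$-dimensional members are exactly the holes of $\mathcal{P}$ lying inside $U_j$. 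The crucial bookkeeping observation is that every non-hole of $\mathcal{P}$ contributes a unique (possibly smaller-dimensional but still non-hole) member to $\mathcal{P}_{U_j}$, so the count of non-hole subspaces is preserved under restriction; hence $\sum_{i\ge s-j}m_i^{(U_j)}=lq^s+x=(lq^j)\cdot q^{s-j}+x$, and I may re-use the original $x\ge 1$ as the ``$x$'' parameter of the next invocation of Lemma \ref{key_lemma}.

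Applying Lemma \ref{key_lemma} to $\mathcal{P}_{U_j}$, viewed as a partition of $U_j$, then produces a hyperplane $U_{j+1}$ of $U_j$---an $(n-j-1)$-dimensional subspace of $\F{n}{q}$---whose hole count takes the form $\widehat{m}_1^{(j+1)}=\widehat{b}\,q^{s-j-1}+\widehat{c}_{j+1}$ with $\widehat{c}_{j+1}=(\widehat{c}_j+x-1)/q$ and $\widehat{b}<b^{(j)}\le b-j$, so $\widehat{b}\le b-(j+1)$. Using the identity $\gaussm{j}{1}{q}+q^j=\gaussm{j+1}{1}{q}$, a direct calculation gives
$$\widehat{c}_{j+1}=\frac{\widehat{c}_j+x-1}{q}=\frac{c+\gaussm{j}{1}{q}(x-1)+q^j(x-1)}{q^{j+1}}=\frac{c+\gaussm{j+1}{1}{q}(x-1)}{q^{j+1}},$$
which is exactly the expression needed to close the induction.

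The main obstacle will be the parameter bookkeeping: verifying that the restricted partitions genuinely retain hole-type structure (which is where $s-j\ge 2$ enters), checking that the parameter $x$ can be carried unchanged through each successive restriction (the invariance of the non-hole count above), and confirming that the current hole count $\widehat{m}_1^{(j)}\ge 1$ at every intermediate step so that Lemma \ref{key_lemma} is genuinely applicable. The upper bound $j\le s-\max\{1,f\}$ is calibrated precisely to secure all of these at once; the term $\max\{1,f\}$ governs how far the divisibility of $c$ by powers of $q$ permits the descent to proceed before $\widehat{c}_j$ or $\widehat{m}_1^{(j)}$ could degenerate to $0$ and block a further application of the lemma.
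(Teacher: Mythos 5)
Your overall strategy -- induction on $j$, iterating Lemma~\ref{key_lemma} while tracking how $b$, $c$ and the residue $x$ of the non-hole count transform under hyperplane sections -- is exactly the argument the paper leaves implicit, and your bookkeeping is correct: distinct members of dimension at least $s$ meet a hyperplane in distinct subspaces of dimension at least $s-1$, so the non-hole count and hence $x$ carry through unchanged; the recursion $\widehat{c}_{j+1}=(\widehat{c}_j+x-1)/q$ telescopes to $\widehat{c}_j=\bigl(c+\gaussm{j}{1}{q}(x-1)\bigr)/q^j$; and $\widehat{b}<b^{(j)}\le b-j$ gives $\widehat{b}\le b-(j+1)$.

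However, you have left unproved the one step that constitutes the entire written proof in the paper: that the intermediate hole counts satisfy $\widehat{m}_1^{(i)}\ge 1$ for every $i<j$, which is a hypothesis of Lemma~\ref{key_lemma} and without which the descent stops. Declaring that the bound $j\le s-\max\{1,f\}$ is ``calibrated precisely to secure'' this is circular -- explaining why that bound suffices is the point of the proof, and it is the only place where $f$ is used at all. The needed argument is short but must be given: a hole count is a non-negative integer, so $\widehat{m}_1^{(i)}\ge 1$ follows once $\widehat{m}_1^{(i)}\not\equiv 0\pmod{q^{s-i}}$, and by your own congruence this reduces to showing $\widehat{c}_i\not\equiv 0\pmod{q^{s-i}}$, i.e., $q^s\nmid c+\gaussm{i}{1}{q}(x-1)$, for all $i<s-f$. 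This is exactly the observation the paper records. Note that establishing this non-vanishing genuinely requires relating the power of $q$ dividing $x-1$ to the power $f$ dividing $c$; in the intended application (Lemma~\ref{lemma_application_partial_spreads}) one has $c=-\gaussm{s}{1}{q}(x-1)$, whence $\widehat{c}_i=-(x-1)\gaussm{s-i}{1}{q}$ is divisible by $q^f$ but not $q^{f+1}$, and the claim is immediate. As it stands, your induction cannot advance past the first level at which $\widehat{c}_i$ might vanish modulo $q^{s-i}$, so you must supply this divisibility verification.
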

\begin{proof}
  Observe $\widehat{m}_1\equiv c\not\equiv 0\pmod{q^{s-j}}$, i.e., $\widehat{m}_1\ge 1$, for all $j<s-f$.
\end{proof}

\begin{Lemma}
  \label{lemma_application_partial_spreads}
  Let $\mathcal{P}$ be a non-trivial vector space partition of type $t^{m_t}1^{m_1}$ of $\F{n}{q}$ with $m_t=lq^t+x$, 
  where $l=\frac{q^{n-t}-q^r}{q^t-1}$, $x\ge 2$, $t=\gaussm{r}{1}{q}+1-z+u>r$, $q^f | x-1$, $q^{f+1} \!\nmid\! x-1$, and  
  $f,u,z,r,x\in\mathbb{N}_0$. For $\max\{1,f\}\le y\le t$ there exists a $(n\!-\!t\!+\!y)$-dimensional 
  subspace $U$ with $L\le (z\!+\!y\!-\!1)q^y\!+\!w$ holes, where $w\!=\!-(x\!-\!1)\gaussm{y}{1}{q}$ and $L\!\equiv w\! \pmod {q^y}$. 
\end{Lemma}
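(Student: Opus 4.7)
The plan is to obtain Lemma~\ref{lemma_application_partial_spreads} as a direct application of Corollary~\ref{cor_j_times} with $s=t$ and $j=t-y$, so that the guaranteed subspace has dimension $n-j=n-t+y$. The main bookkeeping is to choose an integer decomposition $m_1=bq^t+c$ for which $\widehat c$ evaluates to exactly $w$ and the coefficient $b-j$ matches $z+y-1$. The key observation is that one must take $c$ \emph{negative}; this is permitted since Lemma~\ref{key_lemma} allows $b,c\in\mathbb Z$.

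First I would compute $m_1$ from $m_t(q^t-1)+m_1(q-1)=q^n-1$, using $l(q^t-1)=q^{n-t}-q^r$ and $m_t=lq^t+x$, which yields
\[
m_1=\gaussm{t+r}{1}{q}-x\gaussm{t}{1}{q}=\gaussm{r}{1}{q}\cdot q^t-(x-1)\gaussm{t}{1}{q}.
\]
This suggests the decomposition $b=\gaussm{r}{1}{q}$ and $c=-(x-1)\gaussm{t}{1}{q}$. Since $\gaussm{t}{1}{q}\equiv 1\pmod q$ is coprime to $q$, the largest power of $q$ dividing $c$ equals the largest power dividing $x-1$, so the $f$ from Corollary~\ref{cor_j_times} coincides with the $f$ in the hypothesis. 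Consequently Corollary~\ref{cor_j_times} applies in the whole range $0\le j\le t-\max\{1,f\}$, i.e., $\max\{1,f\}\le y\le t$.

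Next, using the identity $\gaussm{t}{1}{q}-\gaussm{j}{1}{q}=q^j\gaussm{t-j}{1}{q}$, the formula from the corollary telescopes:
\[
\widehat c=\frac{c+\gaussm{j}{1}{q}(x-1)}{q^j}=(x-1)\cdot\frac{\gaussm{j}{1}{q}-\gaussm{t}{1}{q}}{q^j}=-(x-1)\gaussm{t-j}{1}{q}=w.
\]
Hence the congruence $L\equiv\widehat c\pmod{q^y}$ provided by Corollary~\ref{cor_j_times} is precisely $L\equiv w\pmod{q^y}$.

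For the upper bound, the corollary gives $L\le (b-j)q^{s-j}+\widehat c=(\gaussm{r}{1}{q}-t+y)q^y+w$. Substituting the hypothesis $\gaussm{r}{1}{q}=t-1+z-u$ yields
\[
L\le (y-1+z-u)q^y+w\le (z+y-1)q^y+w,
\]
where the final inequality uses $u\ge 0$. The only potential obstacle is conceptual: one must accept the negative value of $c$ in the decomposition of $m_1$. However, Lemma~\ref{key_lemma} was stated over $\mathbb Z$ precisely to allow this, so nothing further needs to be checked beyond the Gaussian-binomial identities above.
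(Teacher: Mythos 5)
Your proposal is correct and follows exactly the route the paper takes: its one-line proof is precisely "apply Corollary~\ref{cor_j_times} with $s=t$, $j=t-y$, $b=\gaussm{r}{1}{q}$, and $m_1=\gaussm{r}{1}{q}q^t-\gaussm{t}{1}{q}(x-1)$," and your derivation of $m_1$, the identification $c=-(x-1)\gaussm{t}{1}{q}$ (with the observation that $\gcd(\gaussm{t}{1}{q},q)=1$ makes the two definitions of $f$ agree), the telescoping of $\widehat c$ to $w$, and the estimate $b-j=y-1+z-u\le z+y-1$ supply exactly the bookkeeping the paper leaves implicit.
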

\begin{Proof}
  Apply Corollary~\ref{cor_j_times} with $s=t$, $j=t-y$, $b=\gaussm{r}{1}{q}$, and 
  $m_1%%=\gaussm{n}{1}{q}-m_t\cdot \gaussm{t}{1}{q}
  =\gaussm{r}{1}{q}q^t-\gaussm{t}{1}{q}(x-1)$. 
\end{Proof}

\begin{Lemma}
  \label{lemma_standard_equations_reduced}
  Let $\mathcal{P}$ be a vector space partition of $\F{n}{q}$ with $c\ge 1$ holes and 
  $a_i$ denote the number of hyperplanes containing $i$ holes. Then, $\sum_{i=0}^{c}a_i = \gaussm{n}{1}{q}$, 
  $\sum_{i=0}^{c}ia_i = c\cdot \gaussm{n-1}{1}{q}$ and $\sum_{i=0}^{c}i(i-1) a_i = c(c-1)\cdot \gaussm{n-2}{1}{q}$.
\end{Lemma}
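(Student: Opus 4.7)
The plan is to establish all three identities by standard double counting, exploiting the duality $\gaussm{n}{k}{q}=\gaussm{n}{n-k}{q}$ and the fact that any $k$-dimensional subspace of $\F{n}{q}$ is contained in exactly $\gaussm{n-k}{1}{q}$ hyperplanes (which follows from passing to the quotient space).

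For the first identity, I would simply observe that $\sum_{i=0}^{c} a_i$ counts every hyperplane of $\F{n}{q}$ exactly once, and the total number of hyperplanes equals $\gaussm{n}{n-1}{q}=\gaussm{n}{1}{q}$.

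For the second identity, I would count pairs $(P,H)$ where $P$ is a hole and $H$ is a hyperplane containing $P$. Grouping by $H$ gives $\sum_{i=0}^{c} i a_i$. Grouping by $P$: since each hole is a $1$-dimensional subspace contained in exactly $\gaussm{n-1}{1}{q}$ hyperplanes, and there are $c$ holes, the total is $c\cdot\gaussm{n-1}{1}{q}$. The third identity is proved the same way by counting triples $(P_1,P_2,H)$ with $P_1,P_2$ distinct holes lying in the hyperplane $H$. Grouping by $H$ yields $\sum_{i=0}^{c} i(i-1) a_i$. For the other side, any two distinct holes intersect trivially (since they belong to the partition $\mathcal{P}$), hence span a $2$-dimensional subspace, which is contained in exactly $\gaussm{n-2}{1}{q}$ hyperplanes; there are $c(c-1)$ ordered pairs of distinct holes, giving the claimed $c(c-1)\cdot\gaussm{n-2}{1}{q}$.

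There is no real obstacle here: the argument is entirely routine incidence counting. The only subtle point to flag is that in the third identity one uses essentially that the holes are pairwise trivially intersecting, which is automatic from the definition of a vector space partition, so that distinct holes really do span a $2$-dimensional subspace and the count $\gaussm{n-2}{1}{q}$ applies uniformly.
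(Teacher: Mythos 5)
Your proof is correct and is essentially the paper's own argument (double-counting the incidences of hyperplanes, hole--hyperplane pairs, and ordered pairs of distinct holes with a common hyperplane), just written out in more detail. One small remark: the ``subtle point'' you flag is vacuous, since any two \emph{distinct} $1$-dimensional subspaces automatically intersect trivially and span a $2$-dimensional subspace, with no appeal to the partition property needed.
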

\begin{Proof}
  Double-count the incidences of the tuples $(H)$, $(B_1,H)$, and $(B_1,B_2,H)$, where $H$ is a hyperplane 
  and $B_1\neq B_2$ are points contained in $H$.
\end{Proof}

\begin{Lemma}
  \label{lemma_hyperplane_types_arithmetic_progression_c}
  Let $\Delta=q^{s-1}$, $m\in\mathbb{Z}$, and $\mathcal{P}$ be a vector space partition of $\F{n}{q}$ of hole-type $(t,s,c)$. Then, 
  $\tau_q(c,\Delta,m)\cdot \frac{q^{n-2}}{\Delta^2}-m(m-1)\ge 0$, 
  where $$\tau_q(c,\Delta,m)=m(m-1)\Delta^2q^2-c(2m-1)(q-1)\Delta q+c(q-1)\Big(c(q-1)+1\Big).$$
\end{Lemma}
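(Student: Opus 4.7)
The plan is to express the desired inequality as the non-negativity of a concrete sum over the hyperplanes of $\F{n}{q}$, and then to reduce it using the moment identities of Lemma~\ref{lemma_standard_equations_reduced} together with the arithmetic-progression structure provided by Lemma~\ref{lemma_modulo}. Writing $a_i$ for the number of hyperplanes containing exactly $i$ holes, Lemma~\ref{lemma_standard_equations_reduced} gives $\sum_i a_i=\gaussm{n}{1}{q}$, $\sum_i i\,a_i=c\gaussm{n-1}{1}{q}$ and $\sum_i i(i-1)\,a_i=c(c-1)\gaussm{n-2}{1}{q}$. Lemma~\ref{lemma_modulo} applied with $m_1=c$ then yields a fixed residue $r\in\{0,\dots,\Delta-1\}$ and non-negative integers $k_H$ such that $\widehat{m}_1(H)=r+\Delta k_H$ for every hyperplane $H$.

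My next step is to establish the divisibility $c\equiv r\pmod{\Delta}$. This will follow from counting the $1$-subspaces of $\F{n}{q}$ modulo $q^s$, using $\gaussm{i}{1}{q}\equiv\gaussm{s}{1}{q}\pmod{q^s}$ for $i\ge s$ together with $(q-1)\gaussm{n}{1}{q}=q^n-1\equiv -1\pmod{q^s}$; combining these with the formula $r\equiv(c+x-1)/q\pmod{\Delta}$ from Lemma~\ref{lemma_modulo} forces $x\equiv(q-1)c+1\pmod{q\Delta}$ and hence $c\equiv r\pmod{\Delta}$, so $d:=(c-r)/\Delta\in\mathbb{Z}_{\ge 0}$. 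For each $m\in\mathbb{Z}$ I then set $\alpha:=d+1-m\in\mathbb{Z}$ and invoke the elementary inequality $(k_H-\alpha)(k_H-\alpha+1)\ge 0$, which holds because the factors are two consecutive integers. Summing over hyperplanes,
\[
S\;:=\;\sum_H (k_H-\alpha)(k_H-\alpha+1)\;=\;\frac{1}{\Delta^2}\sum_i a_i\,(i-r-\alpha\Delta)(i-r-(\alpha-1)\Delta)\;\ge\;0.
\]

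Expanding the quadratic in $i$ and plugging in the three moment identities, I expect the $r$-dependence to cancel out completely; this cancellation is exactly what the choice $\alpha=d+1-m$ combined with $c\equiv r\pmod{\Delta}$ is designed to arrange. Converting the Gaussian binomials into $q$-powers via $q^{n-j}=(q-1)\gaussm{n-j}{1}{q}+1$ for $j=0,1,2$, and using the auxiliary identity $\gaussm{n-1}{1}{q}^2-\gaussm{n}{1}{q}\gaussm{n-2}{1}{q}=q^{n-2}$, the resulting $r$-free expression should collapse to the identity $(q-1)S=\tau_q(c,\Delta,m)q^{n-2}/\Delta^2-m(m-1)$, at which point the claim follows from $S\ge 0$ and $q-1>0$.

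The hard part will be the algebraic bookkeeping in this expansion: one must verify that the residue $r$ cancels \emph{exactly} --- not merely up to a non-negative remainder --- under the prescribed choice of $\alpha$, and that the remaining $r$-free terms consolidate cleanly into the factored form $\tau_q(c,\Delta,m)=\bigl(mq\Delta-c(q-1)\bigr)\bigl((m-1)q\Delta-c(q-1)\bigr)+c(q-1)$. The distinguished product $c(q-1)$ --- the number of non-zero vectors in the holes --- should emerge naturally from grouping the terms coming from $(q-1)\gaussm{n-j}{1}{q}=q^{n-j}-1$ for $j=1,2$.
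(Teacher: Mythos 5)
Your proposal is correct and follows essentially the same route as the paper: the quadratic $(i-(c-m\Delta))\bigl(i-(c-(m-1)\Delta)\bigr)$ that your choice $\alpha=d+1-m$ produces is exactly the linear combination of the three moment identities used there, and your derivation of $x-1\equiv c(q-1)\pmod{q^s}$ supplies the congruence $\widehat{m}_1\equiv c\pmod{\Delta}$ that the paper attributes tersely to Lemma~\ref{lemma_modulo}. The final expansion does collapse to $(q-1)S=\tau_q(c,\Delta,m)\cdot q^{n-2}/\Delta^2-m(m-1)$ as you anticipate, via precisely the identity $\gaussm{n-1}{1}{q}^2-\gaussm{n}{1}{q}\gaussm{n-2}{1}{q}=q^{n-2}$ you list.
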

\begin{Proof}
  Consider the three equations from Lemma~\ref{lemma_standard_equations_reduced}. $(c-m\Delta)\Big(c-(m-1)\Delta\Big)$ times the first minus 
  $\Big(2c-(2m-1)\Delta-1\Big)$ times the second plus the third equation, and then divided by $\Delta^2/(q-1)$, gives
  $$
    (q-1)\cdot\sum_{h=0}^{\left\lfloor c/\Delta\right\rfloor} (m-h)(m-h-1)a_{c-h\Delta}=\tau_q(c,\Delta,m)\cdot \frac{q^{n-2}}{\Delta^2}-m(m-1)
  $$ 
  due to Lemma~\ref{lemma_modulo}. Finally, we observe $a_i\ge 0$ and $(m-h)(m-h-1)\ge 0$ for all $m,h\in\mathbb{Z}$.
\end{Proof}

\begin{Lemma}
  \label{lemma_special_excluded_vsp}
  For integers $n>t\ge s\ge 2$ and $1\le i\le s-1$, there exists no vector space partition $\mathcal{P}$ of $\F{n}{q}$ of hole-type $(t,s,c)$, 
  where $c=i\cdot q^s-\gaussm{s}{1}{q}+s-1$.\footnote{For more general non-existence results of vector space partitions see e.g. 
  \cite[Theorem 1]{heden2009length} and the related literature.} 
\end{Lemma}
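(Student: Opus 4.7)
The plan is to apply Lemma~\ref{lemma_hyperplane_types_arithmetic_progression_c} with $\Delta = q^{s-1}$ and a carefully chosen integer $m$; since $m(m-1) \ge 0$ for every $m \in \mathbb{Z}$, any single $m$ for which $\tau_q(c, q^{s-1}, m) < 0$ contradicts the inequality of that lemma and rules out the purported partition.

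My candidate is $m := i(q-1)$. The first move is the algebraic rewriting
$$c(q-1) = \bigl(i(q-1)-1\bigr)\, q^s + \kappa, \qquad \kappa := 1 + (s-1)(q-1),$$
obtained from $c = i q^s - \gaussm{s}{1}{q} + (s-1)$ and $\gaussm{s}{1}{q}(q-1) = q^s - 1$. Viewing $\tau_q$ as a quadratic in $m$, substituting $m = i(q-1)$ and expanding, the coefficient of $q^{2s}$ cancels completely; collecting the remaining terms yields
$$\tau_q(c, q^{s-1}, i(q-1)) = -D\, q^s + \kappa(\kappa+1), \qquad D := (s-1-i)(q-1) + 2,$$
and the hypothesis $1 \le i \le s-1$ forces $D \ge 2$. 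What remains is the inequality $\kappa(\kappa+1) < 2 q^s$ for $s \ge 2$ and $q \ge 2$, which is a short estimate using the binomial lower bound $q^s \ge 1 + s(q-1) + \binom{s}{2}(q-1)^2$. Together with $D \ge 2$ this gives $\tau_q(c, q^{s-1}, i(q-1)) < 0$, which is the desired contradiction.

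The main obstacle is identifying the correct $m$: viewed as a quadratic in $m$, $\tau_q(c, q^{s-1}, \cdot)$ is centred at $\tfrac{1}{2} + c(q-1)/q^s \approx i(q-1) - \tfrac12$, with two real roots that straddle $i(q-1)$ only by a margin of order $q^{-s}$, so $m = i(q-1)$ is essentially the only integer value of $m$ giving a strictly negative value of $\tau_q$. Once this choice is fixed, both the cancellation of the $q^{2s}$-coefficient and the estimate $\kappa(\kappa+1) < 2 q^s$ are routine.
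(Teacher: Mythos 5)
Your proposal is correct and follows essentially the same route as the paper: both apply Lemma~\ref{lemma_hyperplane_types_arithmetic_progression_c} with $\Delta=q^{s-1}$ and the same choice $m=i(q-1)$, and your closed form $\tau_q(c,\Delta,m)=-Dq^s+\kappa(\kappa+1)$ agrees with the paper's expansion (there $D$ appears as $y(q-1)+2$ with $y=s-1-i$). The only difference is cosmetic: your uniform estimate $\kappa(\kappa+1)<2q^s$ via the binomial bound replaces the paper's case distinction between $q=2$, $s=2$, and $q,s>2$.
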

\begin{Proof}
  Assume the contrary and apply Lemma~\ref{lemma_hyperplane_types_arithmetic_progression_c} with $m=i(q-1)$. Setting $y=s-1-i$ 
  and $\Delta=q^{s-1}$ we compute
  \begin{eqnarray*}
    \tau_q(c,\Delta,m)&=&-q\Delta(y(q-1)+2) +(s-1)^2q^2 -q(s-1)(2s-5) +(s-2)(s-3).
    %%&\le& -2q^s+(s-1)^2q^2 -q(s-1)(2s-5)+(s-2)(s-3).
  \end{eqnarray*}
  Using $y\ge 0$ we obtain $\tau_2(c,\Delta,m)\le s^2+s-2^{s+1}<0$. For $s=2$, we have $\tau_q(c,\Delta,m)= -q^2+q<0$ and 
  for $q,s>2$ we have $\tau_q(c,\Delta,m)\le -2q^s+(s-1)^2q^2<0$. Thus, Lemma~\ref{lemma_hyperplane_types_arithmetic_progression_c} 
  yields a contradiction. 
\end{Proof}

\begin{Theorem} (C.f.~\cite[Lemma 10]{nastase2016maximum}.)
  \label{main_theorem_1}
  For integers $r\ge 1$, $k\ge 2$, $u\ge 0$, and $0\le z\le \gaussm{r}{1}{q}/2$ with $t=\gaussm{r}{1}{q}+1-z+u>r$ we have
  $A_q(n,2t;t)\le lq^t+1+z(q-1)$, where $l=\frac{q^{n-t}-q^r}{q^t-1}$ and $n=kt+r$.   
\end{Theorem}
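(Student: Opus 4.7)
We argue by contradiction. Suppose $A_q(n, 2t; t) \ge lq^t + 2 + z(q-1)$; by deleting excess $t$-subspaces from a maximum partial spread (and replacing each by its $\gaussm{t}{1}{q}$ points as holes), we may assume equality, so that a vector space partition $\mathcal{P}$ of $\F{n}{q}$ of type $t^{m_t} 1^{m_1}$ exists with $m_t = lq^t + x$ for $x := 2 + z(q-1)$. A direct computation using $\gaussm{r+t}{1}{q} = q^t\gaussm{r}{1}{q} + \gaussm{t}{1}{q}$ yields $m_1 = q^t \gaussm{r}{1}{q} - (x-1)\gaussm{t}{1}{q}$, which is non-negative under the hypothesis $z \le \gaussm{r}{1}{q}/2$ (otherwise the assumed partial spread cannot exist and the bound holds trivially).

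We then apply Lemma~\ref{lemma_application_partial_spreads} with $y := z + 1$. Admissibility $\max\{1,f\} \le y \le t$ is routine: $t = \gaussm{r}{1}{q} + 1 - z + u \ge z + 1$ because $2z \le \gaussm{r}{1}{q}$, and the elementary bound $1 + z(q-1) < q^{z+1}$ forces $f = v_q(x-1) \le z < y$. The lemma then supplies an $(n - t + y)$-dimensional subspace $U$ whose induced partition $\mathcal{P}_U$ has hole-type $(t, y, L)$ with $L \equiv w \pmod{q^y}$ and $L \le (z + y - 1)q^y + w$, where $w = -(x-1)\gaussm{y}{1}{q}$.

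For $z = 0$ (so $y = 1$ and $w = -1$), the bound reads $L \le -1$, contradicting $L \ge 0$. For $z \ge 1$ we write $L = Jq^y + w$ with $J \in \mathbb{Z}$. The upper bound gives $J \le 2z$, while $L \ge 0$ together with $q^{z+1} > 1 + z(q-1)$ pins down $J \ge z + 1$. A short manipulation using $(q-1)\gaussm{y}{1}{q} = q^y - 1$ then rewrites $L = iq^y - \gaussm{y}{1}{q} + y - 1$ with $i := J - z \in [1, y-1]$, which is exactly the excluded form of Lemma~\ref{lemma_special_excluded_vsp}. Since each of the $m_t \ge 1$ original $t$-subspaces meets $U$ in dimension at least $y \ge 2$, the partition $\mathcal{P}_U$ is non-trivial, and Lemma~\ref{lemma_special_excluded_vsp} delivers the required contradiction.

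The main obstacle will be the arithmetic in the third paragraph: verifying that the two-sided bound on $L$ pinches $J$ precisely into the interval $[z+1, 2z]$, and that the identity $L = iq^y - \gaussm{y}{1}{q} + y - 1$ with $i = J - z$ falls out cleanly. The remaining checks (admissibility of $y$, non-negativity of $m_1$, non-triviality of $\mathcal{P}_U$, and the fact that $\dim U > t$) are straightforward consequences of the side conditions on $k$, $z$, $r$, and $u$.
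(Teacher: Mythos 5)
Your proposal is correct and follows exactly the paper's (very terse) proof: apply Lemma~\ref{lemma_application_partial_spreads} with $x=2+z(q-1)$ and $y=z+1$, get $L<0$ for $z=0$, and for $z\ge 1$ reduce to the excluded hole-count of Lemma~\ref{lemma_special_excluded_vsp}. The arithmetic you flag as the ``main obstacle'' does check out: $L\ge 0$ forces $J\ge z+1$ via $q^{z+1}>1+z(q-1)$, the upper bound gives $J\le 2z$, and $L=(J-z)q^y-\gaussm{y}{1}{q}+y-1$ follows from $(q-1)\gaussm{y}{1}{q}=q^y-1$ and $z=y-1$, so you have simply made explicit the details the paper leaves to the reader.
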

\begin{Proof}
  %% Assume the existence of a non-trivial vector space partition $\mathcal{P}$ of type $t^{m_t}1^{m_1}$ of $\F{n}{q}$ with $m_t=lq^t+x$, 
  %% where $x=2+z(q-1)$. Since $m_t\cdot \gaussm{t}{1}{q}+m_1=\gaussm{n}{1}{q}$, we have 
  %% $m_1=bq^t+c$, where $b=\gaussm{r}{1}{q}$ and $c=-\gaussm{t}{1}{q}(x-1)$.
  %% Apply Corollary~\ref{cor_j_times} with $s=t$ and $j=t-z-1$ on $\mathcal{P}$. The 
  %% $(n-t+z+1)$-dimensional subspace $U$ contains 
  %% $L\le (2z-u)q^{z+1}+\frac{-\gaussm{t}{1}{q}(x-1)+\gaussm{t-z-1}{1}{q}(x-1)}{q^{t-z-1}}\le zq^{z+1} -\gaussm{z+1}{1}{q}+z$ holes.
  Apply Lemma~\ref{lemma_application_partial_spreads} with $x=2+z(q-1)$ and $y=z+1$. 
  If $z=0$, then $L<0$. For $z\ge 1$, apply Lemma~\ref{lemma_special_excluded_vsp}.
  %% using $L\equiv zq^{z+1} -\gaussm{z+1}{1}{q}+z \pmod{q^{z+1}}$ (see Lemma~\ref{lemma_modulo}).
  Thus, $A_q(n,2t;t)\le lq^t+ x-1$.
\end{Proof}

The known constructions for partial $t$-spreads give $A_q(kt+r,2t;t)\ge lq^t+1$, see e.g.~\cite{beutelspacher1975partial} 
(or \cite{kurzspreads} for an interpretation using the more general multilevel construction for subspace codes). Thus, 
Theorem~\ref{main_theorem_1} is tight for $t\ge\gaussm{r}{1}{q}+1$, c.f.~\cite[Theorem 5]{nastase2016maximum}.

\newcommand{\uu}{\lambda}

\begin{Theorem} (C.f.~\cite[Theorem 6,7]{nastase2016maximumII}.)
  \label{main_theorem_2}
  For integers $r\ge 1$, $k\ge 2$, $y\ge \max\{r,2\}$, $z\ge 0$ with $\uu=q^{y}$, $y\le t$, $t=\gaussm{r}{1}{q}+1-z>r$, $n=kt+r$, 
  and  $l=\frac{q^{n-t}-q^r}{q^t-1}$, we have %%$A_q(n,2t;t)\le$
  %%$$
  %%   lq^t+\left\lceil \frac {u+1}{2}\cdot q^{r+1} -\frac{1}{2}-\frac{1}{2}\cdot
  %%  \sqrt{1+q^{r+1}\cdot\left((u+1)^2q^{r+1}-4u(q-1)(z+y-1)-2u-2\right)} \right\rceil.
  %%$$ 
  $$
     A_q(n,2t;t)\le lq^t+\left\lceil \uu -\frac{1}{2}-\frac{1}{2}
    \sqrt{1+4\uu\left(\uu-(z+y-1)(q-1)-1\right)} \right\rceil.
  $$    
\end{Theorem}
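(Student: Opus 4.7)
The plan is to argue by contradiction: suppose a partial $t$-spread of size $lq^t+x$ exists with $x=\lceil B\rceil+1$, where $B:=\lambda-\tfrac{1}{2}-\tfrac{1}{2}\sqrt{1+4\lambda(\lambda-\beta-1)}$ and $\beta:=(z+y-1)(q-1)$, and derive a contradiction by feeding Lemma~\ref{lemma_application_partial_spreads} into Lemma~\ref{lemma_hyperplane_types_arithmetic_progression_c}. A preliminary observation is that $B\le\lambda-\tfrac{1}{2}$ forces $x-1=\lceil B\rceil\le\lambda=q^y$, so the largest $f$ with $q^f\mid x-1$ automatically satisfies $f\le y$; this is the only non-obvious hypothesis needed in Lemma~\ref{lemma_application_partial_spreads}. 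Applying that lemma with $u=0$ produces an $(n-t+y)$-dimensional subspace $U$ whose induced partition $\mathcal{P}_U$ has hole-type $(t,y,L)$ with
\[
   L\le(z+y-1)\lambda-(x-1)\mu \qquad\text{and}\qquad L\equiv -(x-1)\mu\pmod{\lambda},
\]
where $\mu:=\gaussm{y}{1}{q}$.

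The core step is a tailored application of Lemma~\ref{lemma_hyperplane_types_arithmetic_progression_c}. A short computation converts the congruence on $L$ into the parameterization $L(q-1)=j\lambda+(x-1)$ for a unique nonnegative integer $j\le P:=\beta-x+1$. I would then invoke Lemma~\ref{lemma_hyperplane_types_arithmetic_progression_c} with $\Delta=q^{y-1}$ and the tuned value $m=j+1$: because $2m-1=2j+1$ and $m(m-1)=j(j+1)$, the $\lambda^2$-contributions to $\tau_q(L,q^{y-1},j+1)$ cancel exactly, leaving
\[
   \tau_q\!\left(L,q^{y-1},j+1\right)=(j-x+1)\lambda+x(x-1).
\]

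I would then split on $j$. For $j=0$ the lemma requires $\tau\ge 0$, yet $\tau=(x-1)(x-\lambda)<0$ as soon as $2\le x\le\lambda-1$; contradiction. For $j\ge 1$, $m(m-1)\ge 2$, and integrality of $\tau_q$ together with $q^{n-t-y}\ge q$ upgrades the constraint to $\tau\ge 1$, which is violated exactly when $x^2-(\lambda+1)x+(j+1)\lambda\le 0$. Because the smaller root $x_-(j)$ of this quadratic is non-decreasing in $j$ and the larger root $x_+(j)$ is non-increasing, the intersection over $j\in\{1,\ldots,P\}$ collapses to $[x_-(P),x_+(P)]$; substituting $P=\beta-x+1$ reduces the defining condition to the single quadratic
\[
   x^2-(2\lambda+1)x+\lambda(\beta+2)\le 0,
\]
whose roots are precisely $B+1$ and $B'+1$ with $B':=2\lambda-1-B$. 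The standing hypothesis $x\ge\lceil B\rceil+1\ge B+1$ together with the easy estimate $x\le B'+1$ places $x$ in this interval, contradicting the lemma for every $j\in\{1,\ldots,P\}$.

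The hard part will be the treatment of the borderline configurations in which $\tau$ vanishes rather than being strictly negative---most visibly the case $\beta=\lambda-1$ and $x=\lambda$, where $P=0$ so that only the $j=0$ computation is available and it yields $\tau=0$ exactly. Closing this gap will likely require exploiting the equality clause in the proof of Lemma~\ref{lemma_hyperplane_types_arithmetic_progression_c}: it forces $a_{L-h\Delta}=0$ for every $h\ge 2$, imposing an extremely rigid two-value distribution of holes across the hyperplanes of $U$. From this rigidity one can either iterate Corollary~\ref{cor_j_times} into a lower-dimensional subspace or combine it with Lemma~\ref{lemma_special_excluded_vsp} to rule out the remaining boundary configurations.
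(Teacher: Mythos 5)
Your overall strategy coincides with the paper's: the same chain Lemma~\ref{lemma_application_partial_spreads} $\to$ Lemma~\ref{lemma_hyperplane_types_arithmetic_progression_c}, and your choice $m=j+1$ with $L(q-1)=j\lambda+(x-1)$ is exactly the paper's choice $m=i(q-1)-(x-1)+1$ under the substitution $j=i(q-1)-(x-1)$; your identity $\tau_q\!\left(L,q^{y-1},j+1\right)=(j-x+1)\lambda+x(x-1)$ is the paper's $x^2-(2\lambda+1)x+\lambda\left(i(q-1)+2\right)$ rewritten, and the final quadratic $x^2-(2\lambda+1)x+\lambda(\beta+2)\le 0$ is identical. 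Your handling of $j\ge 1$ is correct and in fact slightly cleaner than the paper's: by retaining the $-m(m-1)$ term of Lemma~\ref{lemma_hyperplane_types_arithmetic_progression_c} you obtain a contradiction already from $\tau\le 0$ whenever $m\ge 2$, whereas the paper must argue separately that $\tau=0$ cannot occur.

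The genuine gap is the boundary case you flag at the end, and the directions you propose for closing it point the wrong way. The case $j=0$, $\tau=(x-1)(x-\lambda)=0$ occurs exactly when $x=\lambda$, which (since $\lambda=q^y\ge q^2\ge 4$ and $\lambda(\lambda-\beta-1)<2$ must hold for $\lceil B\rceil=\lambda-1$) forces $\lambda-\beta-1=0$, i.e.\ $z+y-1=\gaussm{y}{1}{q}$. This configuration is excluded by the hypotheses alone: $y\le t=\gaussm{r}{1}{q}+1-z$ gives $z+y-1\le\gaussm{r}{1}{q}$, while $y\ge r$ gives $\gaussm{y}{1}{q}\ge\gaussm{r}{1}{q}$, so equality holds throughout, forcing $y=r$ and then $t=r$, contradicting $t>r$. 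No structural argument about the hole distribution --- neither the equality clause of Lemma~\ref{lemma_hyperplane_types_arithmetic_progression_c}, nor iterating Corollary~\ref{cor_j_times}, nor Lemma~\ref{lemma_special_excluded_vsp} --- is needed or would help, since the offending parameter set simply does not satisfy the theorem's assumptions. Two smaller omissions: you should verify $x\ge 2$ (equivalent to $\beta>0$, which holds since $y\ge 2$ gives $z+y-1\ge 1$), as Lemma~\ref{lemma_application_partial_spreads} requires it; and only those $j$ with $j\equiv-(x-1)\pmod{q-1}$ can actually arise from the congruence on $L$, which is harmless but means the quantifier ``for every $j\in\{0,\dots,P\}$'' overstates what must be checked.
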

\begin{Proof}
  From Lemma~\ref{lemma_application_partial_spreads} we conclude $L\le (z+y-1)q^y-(x-1)\gaussm{y}{1}{q}$ and $L\equiv -(x-1)\gaussm{y}{1}{q} 
  \pmod {q^y}$ for the number of holes of a certain $(n-t+y)$-dimensional subspace $U$ of $\mathbb{F}_q^n$. 
  $\mathcal{P}_U:=\{P\cap U\mid P\in\mathcal{P}\}$ is of hole-type $(t,y,L)$ if $y\ge 2$. Next, we will show that 
  $\tau_q(c,\Delta,m)\le 0$, where $\Delta=q^{y-1}$ and $c=iq^{y}-(x-1)\gaussm{y}{1}{q}$ with $1\le i\le z+y-1$, for suitable integers $x$ and $m$. 
  Note that, in order to apply Lemma~\ref{lemma_application_partial_spreads}, we have to satisfy $x\ge 2$ and $y\ge f$ for all integers $f$ with $q^f|x-1$.   
  Applying Lemma~\ref{lemma_hyperplane_types_arithmetic_progression_c} then gives the desired contradiction, so that 
  $A_q(n,2t;t)\le lq^t+x-1$.
  
  We choose\footnote{ 
  Solving $\frac{\partial \tau_q(c,\Delta,m)}{\partial m}=0$, i.e., minimizing $\tau_q(c,\Delta,m)$, yields $m=i(q-1)-(x-1)+\frac{1}{2}+\frac{x-1}{q^y}$. 
  For $y\ge r$ we can assume $x-1<q^y$ due %%to Lemma~\ref{lemma_multi_component}, 
  the known constructions for partial spreads,  
  so that up-rounding yields the optimum integer choice. For $y<r$ 
  the interval $\left[u+\frac{1}{2}- \frac{1}{2}\theta(i),u+\frac{1}{2}+ \frac{1}{2}\theta(i)\right]$ may contain no integer. 
  } $m=i(q-1)-(x-1)+1$, so that $\tau_q(c,\Delta,m)=x^2-(2\uu+1)x+
  \uu(i(q-1)+2)$. Solving $\tau_q(c,\Delta,m)=0$ for $x$ gives 
  $x_0=\uu+\frac{1}{2}\pm \frac{1}{2}\theta(i)$,  
  where $\theta(i)=\sqrt{1-4i\uu(q-1)+4\uu(\uu-1)}$. We have $\tau_q(c,\Delta,m)\le 0$ for 
  $\left|2x-2\uu-1\right|\le \theta(i)$. We need to find an integer $x\ge 2$ such that this inequality is satisfied for all
  $1\le i\le z+y-1$. The strongest restriction is attained for $i=z+y-1$. Since $z+y-1\le\gaussm{r}{1}{q}$ and $u=q^y\ge q^r$, we have 
  $\theta(i)\ge\theta(z+y-1)\ge 1$, so that $\tau_q(c,\Delta,m)\le 0$ for 
  $x= \left\lceil u+\frac{1}{2}- \frac{1}{2}\theta(z+y-1)\right\rceil$. (Observe $x\le \uu+\frac{1}{2}+ \frac{1}{2}\theta(z+y-1)$ due 
  to $\theta(z+y-1)\ge 1$.) Since $x\le \uu+1$, we have $x-1\le \uu=q^y$, so that $q^f | x-1$ implies $f\le y$ provided $x\ge 2$. The latter is true due to 
  $\theta(z+y-1)\le \sqrt{1-4\uu(q-1)+4\uu(\uu-1)}\le \sqrt{1+4\uu(\uu-2)}< 2(\uu-1)$, which implies $x\ge\left\lceil\frac{3}{2}\right\rceil=2$.
  
  So far we have constructed a suitable $m\in\mathbb{Z}$ such that $\tau_q(c,\Delta,m)\le 0$ for 
  $x=\left\lceil \uu+\frac{1}{2}- \frac{1}{2}\theta(z+y-1)\right\rceil$. If $\tau_q(c,\Delta,m)< 0$, then 
  Lemma~\ref{lemma_hyperplane_types_arithmetic_progression_c} gives a contradiction, so that we assume 
  $\tau_q(c,\Delta,m)=0$ in the following. If $i<z+y-1$ we have $\tau_q(c,\Delta,m)<0$ due to 
  $\theta(i)>\theta(z+y-1)$, so that we assume $i=z+y-1$. Thus, $\theta(z+y-1)\in\mathbb{N}_0$. 
  However, we can write  
  $
    \theta(z+y-1)^2=1+4\uu\left(\uu-(z+y-1)(q-1)-1\right)=(2w-1)^2 =1+4w(w-1)
  $
  for some integer $w$. If $w\notin\{0,1\}$, then $\gcd(w,w-1)=1$, so that either $\uu=q^y\mid w$ or $\uu=q^y\mid w-1$. Thus, in any case, 
  $w\ge q^y$, which is impossible since $(z+y-1)(q-1)\ge 1$. Finally, $w\in\{0,1\}$ implies $w(w-1)=0$, so that $\uu-(z+y-1)(q-1)-1=0$. Thus, 
  $z+y-1=\gaussm{y}{1}{q}\ge \gaussm{r}{1}{q}$ since $y\ge r$. The assumptions $y\le t$ and $t=\gaussm{r}{1}{q}+1-z$ imply
  $z+y-1=\gaussm{r}{1}{q}$ and $y=r$. This gives $t=r$, which is excluded.
\end{Proof}

Setting $y=t$ in Theorem~\ref{main_theorem_2} yields \cite[Corollary~8]{nets_and_spreads}, 
which is based on \cite[Theorem~1B]{bose1952orthogonal}. And indeed, our analysis is very similar to the technique\footnote{Actually, 
their analysis grounds on \cite{plackett1946design} and is strongly related to the classical second-order Bonferroni Inequality 
\cite{bonferroni1936teoria,galambos1977bonferroni,galambos1996bonferroni} in Probability Theory, see e.g.\ 
\cite[Section 2.5]{honold2015constructions} for another application for bounds on subspace codes.} used in 
\cite{bose1952orthogonal}. Compared to \cite{bose1952orthogonal,nets_and_spreads}, the new ingredients essentially are 
lemmas~\ref{lemma_modulo} and \ref{key_lemma}, see also \cite[Proof of Lemma 9]{nastase2016maximum}. 
\cite[Corollary~8]{nets_and_spreads}, e.g., gives $A_2(15,12;6)\le 516$, $A_2(17,14;7)\le 1028$, and $A_9(18,16;8)\le 3486784442$, 
while Theorem~\ref{main_theorem_2} gives $A_2(15,12;6)\le 515$, $A_2(17,14;7)\le 1026$, and $A_9(18,16;8)\le 3486784420$.
Postponing the details and proofs to a more extensive and technical paper \cite{costchapter}, we state: 
%some further results:
\begin{itemize}
  \item $2^4l+1\le A_2(4k+3,8;4)\le 2^4l+4$, where $l=\frac{2^{4k-1}-2^3}{2^4-1}$ and $k\ge 2$, e.g., %$129\le 
        $A_2(11,8;4)\le 132$;
  \item $2^6l+1\le A_2(6k+4,12;6)\le 2^6l+8$, where $l=\frac{2^{6k-2}-2^4}{2^6-1}$ and $k\ge 2$, e.g., %$1025\le 
        $A_2(16,12;6)\le 1032$;
  \item $2^6l+1\le A_2(6k+5,12;6)\le 2^6l+18$, where $l=\frac{2^{6k-1}-2^5}{2^6-1}$ and $k\ge 2$, e.g., %$2049\le 
        $A_2(17,12;6)\le 2066$;      
  %%\item $2^7l+1\le A_2(7k+5,14;7)\le 2^7l+17$, where $l=\frac{2^{7k-2}-2^5}{2^7-1}$ and $k\ge 2$, e.g., %$4097\le 
  %%      $A_2(19,14;7)\le 4113$;
  \item $3^4l+1\le A_3(4k+3,8;4)\le 3^4l+14$, where $l=\frac{3^{4k-1}-3^3}{3^4-1}$ and $k\ge 2$, e.g., 
        $A_3(11,8;4)\le 2201$; %2188
  \item $3^5l+1\le A_3(5k+3,10;5)\le 3^5l+13$, where $l=\frac{3^{5k-2}-3^5}{3^3-1}$ and $k\ge 2$, e.g., 
        $A_3(13,10;5)\le 6574$; %6562
  \item $3^5l+1\le A_3(5k+4,10;5)\le 3^5l+44$, where $l=\frac{3^{5k-1}-3^4}{3^5-1}$ and $k\ge 2$, e.g., 
        $A_3(14,10;5)\le 19727$; %19684
  \item $3^6l+1\le A_3(6k+4,12;6)\le 3^6l+41$, where $l=\frac{3^{6k-2}-3^4}{3^6-1}$ and $k\ge 2$, e.g., 
        $A_3(16,12;6)\le 59090$; %59050
  \item $3^6l+1\le A_3(6k+5,12;6)\le 3^6l+133$, where $l=\frac{3^{6k-1}-3^5}{3^6-1}$ and $k\ge 2$, e.g., 
        $A_3(17,12;6)\le 177280$; %177148
  \item $3^7l+1\le A_3(7k+4,14;7)\le 3^7l+40$, where $l=\frac{3^{7k-3}-3^4}{3^7-1}$ and $k\ge 2$, e.g., 
        $A_3(18,14;7)\le 177187$; %%177148        
  \item $4^5l+1\le A_4(5k+3,10;5)\le 4^5l+32$, where $l=\frac{4^{5k-2}-4^3}{4^5-1}$ and $k\ge 2$, e.g., 
        $A_4(13,10;5)\le 65568$; %65537
  \item $4^6l+1\le A_4(6k+3,12;6)\le 4^6l+30$, where $l=\frac{4^{6k-3}-4^3}{4^6-1}$ and $k\ge 2$, e.g., 
        $A_4(15,12;6)\le 262174$; %262145
  \item $4^6l+1\le A_4(6k+5,12;6)\le 4^6l+548$, where $l=\frac{4^{6k-1}-4^5}{4^6-1}$ and $k\ge 2$, e.g., 
        $A_4(17,12;6)\le 4194852$; %4194305
  \item $4^7l+1\le A_4(7k+4,14;7)\le 4^7l+128$, where $l=\frac{4^{7k-3}-4^4}{4^7-1}$ and $k\ge 2$, e.g., 
        $A_4(18,14;7)\le 4194432$; %4194305 
  \item $5^5l+1\le A_5(5k+2,10;5)\le 5^5l+7$, where $l=\frac{5^{5k-3}-5^2}{5^5-1}$ and $k\ge 2$, e.g., 
        $A_5(12,10;5)\le 78132$; %78126
  \item $5^5l+1\le A_5(5k+4,10;5)\le 5^5l+329$, where $l=\frac{5^{5k-1}-5^4}{5^5-1}$ and $k\ge 2$, e.g., 
        $A_5(14,10;5)\le 1953454$; %1953126 % NEW      
  \item $7^5l+1\le A_7(5k+4,10;5)\le 7^5l+1246$, where $l=\frac{7^{5k-1}-7^2}{7^5-1}$ and $k\ge 2$, e.g., 
        $A_7(14,10;5)\le 40354853$; %40353608 % NEW
  \item $8^4l+1\le A_8(4k+3,8;4)\le 8^4l+264$, where $l=\frac{8^{4k-1}-8^3}{8^4-1}$ and $k\ge 2$, e.g., 
        $A_8(11,8;4)\le 2097416$; %2097153
  \item $8^5l+1\le A_8(5k+2,10;5)\le 8^5l+25$, where $l=\frac{8^{5k-3}-8^2}{8^5-1}$ and $k\ge 2$, e.g., 
        $A_8(12,10;5)\le 2097177$; %2097153      
  \item $8^6l+1\le A_8(6k+2,12;6)\le 8^6l+21$, where $l=\frac{8^{6k-4}-8^2}{8^6-1}$ and $k\ge 2$, e.g., 
        $A_8(14,12;6)\le 16777237$; %16777217 % NEW        
  \item $9^3l+1\le A_9(3k+2,6;3)\le 9^3l+41$, where $l=\frac{9^{3k-1}-9^2}{9^3-1}$ and $k\ge 2$, e.g., 
        $A_9(8,6;3)\le 59090$; %59050
  \item $9^5l+1\le A_9(5k+3,10;5)\le 9^5l+365$, where $l=\frac{9^{5k-2}-9^3}{9^5-1}$ and $k\ge 2$, e.g., 
        $A_9(13,10;5)\le 43047086$; %43046722 % NEW     
\end{itemize}
c.f.\ the web-page mentioned in footnote~1 for more numerical values and comparisons of the different upper 
bounds.

\end{document}